\newcommand{\zlom}{\penalty0\relax}
\newtheorem{theorem}{Theorem}
\newtheorem{lemma}[theorem]{Lemma}
\newtheorem{corollary}[theorem]{Corollary}
\def\email#1{\href{mailto:#1}{\texttt{#1}}}
\title{Connected even factors in the square of essentially 2-edge-connected graphs}
\author{Jan Ekstein\thanks{Department of Mathematics, Institute for Theoretical Computer Science, and European Centre of Excellence NTIS - New Technologies 
        for the Information Society, Faculty of Applied Sciences, University of West Bohemia, Pilsen, Technick\'a 8, 306 14 Plze\v n, Czech Republic, 
        e-mail: \email{ekstein@kma.zcu.cz}.}\and
        Baoyindureng Wu\thanks{Xinjiang University, Urumgi, Xinjiang, P.R.China, e-mail: \email{baoyin@xju.edu.cn}.}\and
        Liming Xiong\thanks{School of Mathematics and Statistics and Beijing Key Laboratory on MCAACI, Beijing Institute of Technology, Beijing, P.R.China,
        e-mail: \email{lmxiong@bit.edu.cn}.}}
\date{\today}
\begin{document}

\maketitle

\begin{abstract}
 An essentially $k$-edge connected graph $G$ is a connected graph such that deleting less than $k$ edges from $G$ cannot result in two nontrivial components.
 In this paper we prove that if an essentially 2-edge-connected graph $G$ satisfies that for any pair of leaves at distance 4 in $G$ there exists another
 leaf of $G$ that has distance 2 to one of them, then the square $G^2$ has a connected even factor with maximum degree at most 4. Moreover we show that,
 in general, the square of essentially 2-edge-connected graph does not contain a connected even factor with bounded maximum degree.

 {\bfseries Keywords}: connected even factors; (essentially) 2-edge connected graphs; square of graphs

 {\bfseries 2010 Mathematics Subject Classification:} 05C38,05C48
\end{abstract}

\section { Introduction}
We consider only finite undirected simple graphs. For terminology and notation not defined in this paper we refer to \cite{WES}. Let $G$ be a connected graph.
For vertices $x, y$ of $G$, let $N_G(x)$ denote the \emph{neighborhood} of $x$ in $G$, $d_G(x)=|N_G(x)|$ the \emph{degree} of $x$ in $G$, and
$\mbox{dist}_G(x,y)$ the \emph{distance} between $x, y$ in $G$. The \emph{square} of a graph $G$, denoted by $G^2$, is the graph with same vertex set as $G$
in which two vertices are adjacent if their distance in $G$ is at most 2. Thus $G\subseteq G^2$. There are several papers (e.g. see \cite{EKS}, \cite{EL},
\cite{FAU}, \cite{FLE}, \cite{GOU}, \cite{HEN}, \cite{CHA}, and \cite{CHI}) about hamiltonian properties in the square of a graph. This paper deals with
connected even factors which generalize some previous known results.

A \emph{factor} in a graph $G$ is a spanning subgraph of $G$. A \emph{connected even factor} in $G$ is a connected factor in $G$ in which every vertex has
positive even degree. A \emph{$[2, 2s]$-factor} of $G$ is a connected even factor of $G$ in which every vertex has  degree at most $2s$. There are some 
results about such kind of factors in terms of forbidden subgraphs (see \cite{Duan}, \cite{LiMingChu}, and \cite{Favaron}). Since a hamiltonian cycle is 
a $[2, 2s]$-factor with $s=1$, the minimum $s$ in a $[2, 2s]$-factor of a graph can be seen as a measure for how close a graph is to become hamiltonian. 
Furthermore we know from \cite{Under} that  it is NP-complete to determine whether the square of a graph is hamiltonian. Therefore the determination 
of minimum $s$ in a $[2, 2s]$-factor in the square of a graph is also NP-complete.

The result by Fleischner in \cite{FLE} concerning the existence of a hamiltonian cycle (a $[2, 2]$-factor) in the square of 2-connected graph is
well known. Recently, M\"{u}ttel and  Rautenbach in \cite{MUT} gave a shorter proof of this result.

\begin{theorem}\emph{\textbf{\cite{FLE}}}
 \label{Fleischner}
  If $G$ is a 2-connected graph and $v_1$ and $v_2$ are two distinct vertices of $G$, then $G^2$ contains a hamiltonian cycle $C$ such that both edges of $C$
  incident with $v_1$ and one edge of $C$ incident with $v_2$ belong to $G$. Furthermore, if $v_1$ and $v_2$ are neighbors in $C$, then these are three
  distinct edges.
\end{theorem}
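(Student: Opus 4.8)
The plan is to prove Theorem~\ref{Fleischner} by induction on $|V(G)|$. The base case is $|V(G)|=3$, where the only $2$-connected graph is $K_3=G^2$ and $C=G$ meets all requirements trivially. For the inductive step I would first reduce to the case where $G$ is \emph{minimally} $2$-connected: if some edge $e$ of $G$ can be deleted with $G-e$ still $2$-connected, then the induction hypothesis applied to $G-e$ (with the same $v_1$ and $v_2$) produces a Hamiltonian cycle $C$ of $(G-e)^2\subseteq G^2$ whose prescribed edges lie in $G-e\subseteq G$, and we are done. If $G$ is a cycle, then $G^2$ is easily seen to contain a Hamiltonian cycle with the required properties, so assume $G$ is minimally $2$-connected and not a cycle. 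Then $G$ contains a \emph{thread} $P=a\,x_1\,x_2\cdots x_m\,b$ with $m\ge 1$, all of whose interior vertices $x_i$ have degree $2$ in $G$, whose ends $a,b$ are distinct and of degree at least $3$, and with $ab\notin E(G)$; by the structure theory of minimally $2$-connected graphs one may choose $P$ so that $H:=G-\{x_1,\dots,x_m\}$ is again $2$-connected, and $|V(H)|<|V(G)|$.

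The inductive heart is to remove such a thread, apply the induction hypothesis to $H$, and splice $P$ back in, and this is exactly the point where the two marked vertices and the requirement that their incident cycle edges be genuine host-graph edges are used. In the cleanest sub-case one runs the recursion on $H$ with one end of the removed thread, say $a$, controlled; the Hamiltonian cycle $C'$ of $H^2$ returned then uses at $a$ two edges of $H$, hence of $G$. Letting $y$ be the $C'$-neighbour of $a$ along one of these edges, one replaces the edge $ay$ of $C'$ by a Hamiltonian path of the square of the path $a\,x_1\cdots x_m$ running from $a$ to $x_1$ (such a path exists in the square of any path) followed by the edge $x_1y$; since $\mbox{dist}_G(x_1,y)\le \mbox{dist}_G(x_1,a)+\mbox{dist}_G(a,y)\le 2$, this last edge lies in $G^2$, every edge used is an edge of $G^2$, and the result is a Hamiltonian cycle of $G^2$. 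The remaining control data must be carried through the recursion so that, after choosing which of the two edges at $a$ to cut, the conditions at $v_1$ and $v_2$ survive the surgery; the ``furthermore'' clause is maintained by regarding the three required edges, when $v_1$ and $v_2$ are consecutive on $C$, as a fixed path of three $G$-edges that the surgery is arranged to avoid.

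The main obstacle I anticipate is reconciling the demands of this surgery with the fact that only two vertices can be marked: the removed thread must be positioned so that one of its ends can serve as a controlled vertex in the recursive call while $v_1$ and $v_2$ remain controllable there, and the edge cut at $a$ must avoid the controlled edges at $v_1$ and $v_2$. When the thread can be chosen far from $\{v_1,v_2\}$ this is unproblematic, but when $v_1$ or $v_2$ coincides with $a$, with $b$, or with an interior vertex of the thread, or when the two $C'$-neighbours of $a$ are precisely $v_1$ and $v_2$, one is forced into a careful case analysis — selecting a different thread, cutting a different edge, or building the relevant local segment of $C$ by hand from $G$-edges. This case analysis, organized around the mutual positions of $v_1$, $v_2$, and a well-chosen thread, is where the bulk of the argument lies, and it is essentially the content of Fleischner's original proof.
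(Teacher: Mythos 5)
This theorem is not proved in the paper at all: it is Fleischner's classical result, quoted from \cite{FLE} (with a shorter proof in \cite{MUT}) and used as a black box in the proof of the Useful lemma. So the only question is whether your sketch stands on its own, and it does not: it has genuine gaps. The decisive one is the one you yourself flag at the end. Your induction statement allows only two marked vertices, but the thread surgery needs control at the thread end $a$ (two $G$-edges of $C'$ at $a$, one of which is cut and replaced) \emph{in addition to} whatever control is still required at $v_1$ and $v_2$, which may both lie in $H$ and be distinct from $a$. With the theorem as stated there is no way to ask the recursive call for all of this, and the cases you list (a marked vertex equal to $a$, to $b$, to an interior thread vertex, or the two $C'$-neighbours of $a$ being exactly $v_1,v_2$) are not degenerate corner cases but the substance of the theorem; saying that this ``is essentially the content of Fleischner's original proof'' concedes that the proof is not given. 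The known arguments (Fleischner's original one, and the M\"uttel--Rautenbach proof cited in the paper) get around exactly this obstruction by proving a strictly stronger, more ``versatile'' induction statement; without formulating and proving such a strengthening, the splice-and-recurse plan cannot be closed.

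There are also two smaller but real defects. First, your reduction to minimally 2-connected graphs is circular under induction on $|V(G)|$: deleting an edge does not decrease the vertex count, so the induction hypothesis does not apply to $G-e$; you would need to induct on $|V(G)|+|E(G)|$ or handle edge deletion by a separate argument. Second, the structural claim that a minimally 2-connected non-cycle graph always contains a thread $P=a x_1\cdots x_m b$ with $m\ge 1$, $ab\notin E(G)$, ends of degree at least $3$, and $G$ minus the interior of $P$ again 2-connected is asserted by appeal to ``structure theory'' but not proved; it needs a citation or an argument (and it is exactly the kind of statement where an overlooked configuration can sink the induction). Also note that after the splice the new edge at $a$ (namely $ax_2$ when $m\ge 2$) is generally not an edge of $G$, so even the condition at $a$ itself degrades from ``both edges in $G$'' to ``one edge in $G$'' --- harmless if $a\notin\{v_1,v_2\}$, fatal if $a=v_1$, which again shows that the bookkeeping you defer is where the theorem actually lives.
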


Theorem \ref{Fleischner} was a base for proving the following theorem by Abderrezzak et al. in \cite{EL} using forbidden subgraphs.
The graph $S(H)$ is obtained from a graph $H$ by subdividing  each edge of $H$ exactly once.

\begin{theorem}\emph{\textbf{\cite{EL}}}
 \label{Abderrezzak}
  If $G$ is a connected graph such that every induced $S(K_{1,3})$ has at least three edges in a block of degree at most 2, then $G^{2}$ is hamiltonian.
\end{theorem}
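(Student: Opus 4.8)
The plan is to prove, by induction on the number of blocks of $G$, a strengthening that tracks prescribed edges: if $G$ is connected, has at least three vertices and satisfies the hypothesis, then for every vertex $v$ the graph $G^2$ has a hamiltonian cycle carrying at least one edge of $G$ at $v$, and moreover such an incidence can be prescribed at a second vertex $v'$ whenever the block structure of $G$ permits it (making the precise ``feasibility'' condition on the pair $(v,v')$ explicit is part of the task --- it must, for instance, forbid prescribing \emph{both} edges at an interior vertex of a path, which fails even for caterpillars). The base case, $G$ $2$-connected, is a special case of Theorem~\ref{Fleischner}. It is worth noting that when $G$ is a tree the hypothesis holds precisely when $G$ has no induced $S(K_{1,3})$, i.e.\ when $G$ is a caterpillar, so the theorem interpolates between Theorem~\ref{Fleischner} and the Harary--Schwenk description of trees with hamiltonian square.

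For the inductive step suppose $G$ has a cut vertex, choose an end block $B$ with cut vertex $c$, and put $H=G-(V(B)\setminus\{c\})$. One first checks that $H$ inherits the hypothesis: an end block shares with every other block only its cut vertex, so each block $D\neq B$ of $G$ lies in $H$ and is still a block there with $\deg_H(D)\le\deg_G(D)$, and any induced $S(K_{1,3})$ of $H$ is one of $G$ whose three special edges, having no edge in $B$, therefore lie in some such $D$ of degree at most $2$. Next, Theorem~\ref{Fleischner} applied to $B$ with $v_1=c$, after deleting one of the two $B$-edges at $c$ from the resulting hamiltonian cycle of $B^2$, yields a hamiltonian path $Q=c,q_1,\dots,q_k$ of $B^2$ whose last vertex $q_k=u$ is a neighbour of $c$ in $B$ and whose first edge $cq_1$ lies in $B$ (one may also arrange a prescribed incidence inside $B$ by a suitable choice of $v_2$); the cases $B=K_2$ and $|V(H)|=2$ are handled directly. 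The induction hypothesis, applied to $H$ with $c$ among the prescribed vertices, gives a hamiltonian cycle $C_H$ of $H^2$ using an edge $cc_1$ with $c_1\in N_G(c)\cap V(H)$, and replacing $cc_1$ by the path $c,q_1,\dots,q_k,c_1$ --- legitimate because $\mathrm{dist}_G(u,c_1)\le 2$ --- produces a hamiltonian cycle of $G^2$; the prescribed incidence lands inside $Q$, or on $cq_1\in E(B)$, or inside $C_H$, according to where the target vertex lies.

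The delicate part is managing the prescribed-vertex count along the recursion together with the geometry of the block tree, and this is exactly where the full strength of the hypothesis is used. When both prescribed vertices lie strictly inside the remaining graph and no peelable end block contains either of them, a naive recursion would need three prescribed incidences; one then splits $G$ at an interior cut vertex (or at a prescribed vertex) and glues constrained hamiltonian \emph{paths} of the two sides, which calls for the path versions of Theorem~\ref{Fleischner}. The forbidden-subgraph condition keeps the branchings one can meet tame: a cut vertex in three blocks, or a block with three cut vertices, whose branches each reach distance at least two through a tree-like part, yields an induced $S(K_{1,3})$ with its six edges spread over at least four $K_2$-blocks, so no three of them lie in one block of degree at most $2$ --- hence such branchings do not occur, and the ones that remain (branches confined to a single end block, or to a block of degree at most $2$ that a Fleischner path can thread) do not over-constrain the neighbouring cycle. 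I expect the main obstacle to be precisely this case analysis at branch vertices --- classifying the block configurations compatible with the hypothesis and verifying, for each, that the splicing respects every prescribed $G$-incidence, along with the degenerate situations $B=K_2$, $|V(H)|=2$, the target equal to $c$, or the prescribed vertices being cut vertices.
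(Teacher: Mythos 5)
First, note that this paper does not prove Theorem~\ref{Abderrezzak} at all: it is quoted from the reference \cite{EL} as background for the results actually proved here, so there is no in-paper proof to measure you against. Judged on its own terms, what you have written is a plan rather than a proof, and the plan defers exactly the parts that carry the mathematical content. You never formulate the strengthened inductive statement: the ``feasibility condition'' on the pair $(v,v')$ of prescribed incidences is left as ``part of the task,'' yet the whole induction stands or falls with it, since the splice at the cut vertex $c$ needs the hamiltonian cycle of $H^2$ to use an edge of $G$ at $c$, and whether such an edge can always be guaranteed at a vertex like $c$ (which may carry pendant edges, lie deep in a tree-like part, or sit in several blocks of $H$) is precisely the hard question. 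Likewise the ``delicate part'' at branch vertices is only gestured at, and the classification you sketch there is not correct as stated: the hypothesis does \emph{not} exclude a cut vertex lying in three or more blocks, nor a block of degree three or more --- it only excludes branchings that produce an induced $S(K_{1,3})$ spread over small blocks, so configurations such as several cyclic blocks sharing a cut vertex satisfy the hypothesis and must be threaded by a cycle that has degree exactly $2$ everywhere, with no tree-like structure to exploit. (Compare the Useful Lemma of this paper, which handles exactly such configurations by a careful ordering of blocks in $BC(G')$ but only achieves degree $4$ at cut vertices; getting degree $2$ there is where the forbidden-subgraph condition must actually be spent, and your outline does not show how.) Finally, the fallback you invoke when too many incidences must be prescribed --- gluing constrained hamiltonian \emph{paths} via ``path versions'' of Theorem~\ref{Fleischner} --- appeals to statements you neither prove nor cite in a usable form.

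So the gap is concrete: no precise inductive statement, no verification that the recursion can always deliver a $G$-edge of the cycle at the cut vertex being spliced, and no treatment of the branch configurations (cut vertices in several cyclic blocks, blocks of degree at least $3$) that the hypothesis genuinely permits. Until those are supplied, the argument establishes the result only for graphs whose block tree is a path together with peelable end blocks, which is far short of Theorem~\ref{Abderrezzak}.
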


Theorem \ref{Abderrezzak} was generalized by Ekstein et al. in \cite{EKS} for $[2, 2s]$-factors.

\begin{theorem}\emph{\textbf{\cite{EKS}}}
 \label{Ekstein}
  Let $s$ be a positive integer and $G$ be a connected graph such that every induced $S(K_{1, 2s+1})$ has at least three edges in a block of degree
  at most two. Then $G^2$ has a $[2, 2s]$-factor.
\end{theorem}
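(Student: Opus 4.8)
\bigskip\noindent\textbf{Proof proposal.}\quad
The plan is to argue by induction on the number of blocks of $G$, with Theorem~\ref{Fleischner} giving the base case and the $[2,2s]$-factor being assembled one block at a time. After disposing of paths and a few other small graphs by inspection, observe that if $G$ is $2$-connected then a Hamilton cycle of $G^2$ supplied by Theorem~\ref{Fleischner} is already a $[2,2s]$-factor, so we may assume $G$ has a cut vertex. To carry out the inductive gluing one proves a statement slightly stronger than the theorem, in the spirit of the extra conclusions of Theorem~\ref{Fleischner}: for a prescribed vertex $v$ (and, when the argument requires it, a second one), $G^2$ has a $[2,2s]$-factor $F$ in which as many of the edges of $F$ at $v$ as the local structure of $G$ at $v$ permits --- one in general, two when that structure is not too constrained --- belong to $E(G)$. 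Pinning down the sharpest self-supporting form of this strengthening is part of the work; its purpose is to make the splicing step below possible.

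For the inductive step, fix $v$ and choose a pendant piece of $G$ whose attachment vertex $c$ differs from $v$: a maximal pendant path (possibly a single leaf), or, if $G$ has none, a $2$-connected end block $B$ of the block--cut tree. (If every pendant piece is attached at $v$, the block--cut tree is a star at $v$ and a small variant applies.) Put $G'=G-(V(B)\setminus\{c\})$; it again satisfies the hypothesis, since an induced $S(K_{1,2s+1})$ of $G'$ is induced in $G$, and --- because every block is an edge or is $2$-connected and $c$ is a cut vertex --- the block of $G$ of degree at most two carrying three of that spider's edges is still such a block of $G'$. By induction $(G')^2$ has a $[2,2s]$-factor $F'$ containing an edge $ca\in E(G)$. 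Build a gadget covering $V(B)$: if $B$ is $2$-connected, the Hamilton cycle of $B^2$ from Theorem~\ref{Fleischner} with both $c$-edges in $B$; if $B$ is a pendant path, the standard Hamilton cycle of its square, which returns to $c$ along an edge of $G$ and a distance-two chord; a single pendant leaf is instead used to subdivide a $G$-edge of $F'$ at $c$. Splice the gadget to $F'$ by deleting $ca$ together with a $G$-edge of the gadget at $c$ and adding the edge between their far endpoints --- an edge of $G^2$, because each far endpoint is a neighbour of $c$ in $G$, so the two are at distance at most $2$. Since a connected even graph has no bridge, $F'-ca$ stays connected, so the outcome is a connected even factor of $G^2$; a short count shows every vertex keeps the degree it had in $F'$ or in the gadget, so the degree at $c$ stays at most $2s$, and the stronger ``$G$-edges at the prescribed vertices'' property is re-established by choosing carefully which $G$-edges the gadget consumes at $c$.

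The technical heart --- and the only place the hypothesis is truly used --- is precisely that last clause: keeping every degree at most $2s$ and keeping enough $G$-edges available at vertices that still carry pendant pieces. The obstruction is a vertex $c$ with many pendant pieces. Indeed, $k$ genuinely pendant, locally path-like branches of length at least two at $c$ produce an induced $S(K_{1,k})$ (take one vertex at distance two from $c$ in each branch), and for a branch of length exactly two the far endpoint has degree two in $G^2$, which forces an edge of the factor at $c$; just as $S(K_{1,2s+1})^2$ has no $[2,2s]$-factor, $k\ge 2s+1$ would obstruct the whole construction. Here the hypothesis intervenes: an induced $S(K_{1,2s+1})$ must have three edges inside a single block $B_0$ of degree at most two, and since $B_0$ is $2$-connected while $c$ separates distinct branches, two of the ``branches'' at $c$ must actually lie inside $B_0$ --- so they were never separate pendant branches, and Theorem~\ref{Fleischner} applied to $B_0$ lets one route the factor through all of $B_0$ using only two edges at $c$. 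Converting this dichotomy into a clean bound $d_F(c)\le 2s$, exploiting that the neighbours of any cut vertex form a clique in $G^2$ (so most of the required connectivity can be realised around $c$ rather than through it), and checking that the few configurations in which the stronger ``two $G$-edges at $v$'' conclusion could fail are exactly those the hypothesis excludes --- this is where the effort lies; the base case, the inheritance of the hypothesis, and the splicing bookkeeping are routine.
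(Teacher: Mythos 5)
This statement is quoted in the paper from the literature (it is Theorem~\ref{Ekstein}, cited from \cite{EKS}); the paper itself contains no proof of it, so your proposal can only be measured against the analogous machinery the paper builds for the case $s=2$, namely the Useful Lemma (Lemma~\ref{Main}). Seen that way, your overall plan (induction over the block structure, Theorem~\ref{Fleischner} in each cyclic block, distance-two chords to splice pieces together at cut vertices) is the right kind of plan, but your write-up leaves out exactly the part that constitutes the proof: you never formulate the strengthened inductive statement (you say ``pinning down the sharpest self-supporting form of this strengthening is part of the work''), and you never prove the degree bound $d_F(c)\le 2s$ at a cut vertex carrying many pendant branches (``this is where the effort lies''). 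Those two deferred items are the theorem; the base case, the inheritance of the hypothesis under deleting an end block, and the distance-two splicing are the routine parts.

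Worse, the bookkeeping you do state is inconsistent, which shows the missing invariant cannot be the one you gesture at. Your splice (delete $ca\in E(F')\cap E(G)$ and one $G$-edge of the gadget at $c$, add the chord between the far endpoints) preserves the degree of every vertex; starting from Hamiltonian cycles in the base cases it would therefore output a $2$-factor, i.e.\ a Hamiltonian cycle of $G^2$. That is false under the hypothesis: for $s\ge 2$ the spider $S(K_{1,4})$ satisfies the hypothesis vacuously, yet in its square each leaf is adjacent only to its own middle vertex and to the centre $c$, so every connected even factor must contain all four leaf--centre edges and has degree $4$ at $c$; no Hamiltonian cycle exists. Likewise the property you invoke from the induction hypothesis, that $F'$ contains a $G$-edge at the attachment vertex, can simply fail: every $2$-factor of $P_5^2$ (that is, $S(K_{1,2})^2$) uses the two distance-two edges at the centre and no $G$-edge there. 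So the inductive invariant must allow the degree at cut vertices to grow (up to $2s$), must record how many $G$-edges of the factor at each cut vertex are available and from which children blocks they come, and must use the condition on induced $S(K_{1,2s+1})$ to bound the number of branches that force additional edges at a cut vertex --- this is precisely what the paper's Useful Lemma does for $s=2$ (degree exactly $4$ at every cut vertex, at least two $G$-edges at it taken from its children blocks, properties (c)--(e) keeping those edges disjoint across cut vertices). Without an analogue of such a statement for general $s$, your induction does not close, so the proposal as it stands is a plan rather than a proof.
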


Let $G$ be a connected graph. Recall that a graph $G$ is \emph{essentially $k$-edge connected} if deleting less than $k$ edges from $G$ cannot result
in two nontrivial components. In this paper, we shall answer the question how it is for the existence of a $[2, 2s]$-factor in the square of a graph
with 2-edge (or essentially 2-edge)-connectivity instead of (vertex) connectivity of a graph.

A vertex of degree 1 is called {\sl a leaf}. A cut vertex $y$ is {\sl trivial} in $G$, if $y$ is not a cut vertex in $G-M$,
where $M$ is a set of all leaves adjacent  to $y$, otherwise is {\sl non-trivial}. If $M=\{x\}$ and the neighbor of $x$ is a trivial cut vertex of $G$,
then $x$ is called {\sl a bad leaf}. {\sl A trivial bridge} is a cut-edge of $G$ containing a leaf, otherwise is {\sl non-trivial}.
{\sl A bad bridge} is a trivial bridge of $G$ adjacent to a bad leaf. For illustration see Fig.~1.

\begin{figure}[ht]
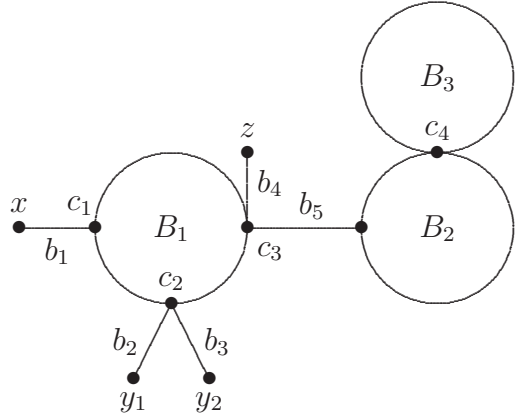

 \begin{center}
  \label{Figure1}
  $$\beginpicture
   \setcoordinatesystem units <1mm,1mm>
   \setplotarea x from 0 to 0, y from 0 to 30
   \put{$\bullet$} at -30  0
   \put{$x$} at -30 3
   \put{$\bullet$} at -20  0
   \put{$c_{1}$} at -22 3
   \plot -30 0 -20 0 /
   \put{$b_{1}$} at -25 -3
   \circulararc 360 degrees from -20 0 center at -10 0
   \put{$B_{1}$} at -10 0
   \put{$\bullet$} at -10 -10
   \put{$c_{2}$} at -10 -7
   \put{$\bullet$} at -15 -20
   \put{$y_{1}$} at -15 -23
   \put{$\bullet$} at  -5 -20
   \put{$y_{2}$} at -5 -23
   \plot -15 -20 -10 -10 -5 -20 /
   \put{$b_{2}$} at -16 -15
   \put{$b_{3}$} at  -4 -15
   \put{$\bullet$} at  0  0
   \put{$c_{3}$} at 3 -3
   \put{$\bullet$} at  0 10
   \put{$z$} at 0 13
   \put{$\bullet$} at 15  0
   \plot 15 0 0 0 0 10 /
   \put{$b_{4}$} at 3 6
   \put{$b_{5}$} at 8.5 3
   \circulararc 360 degrees from 15 0 center at 25 0
   \put{$B_{2}$} at 25 0
   \put{$\bullet$} at 25 10
   \put{$c_{4}$} at 25 13
   \circulararc 360 degrees from 25 10 center at 25 20
   \put{$B_{3}$} at 25 20
  \endpicture$$
  \caption{In this graph, $c_{1}, c_{2}$ are trivial cut vertices, $c_{3}, c_{4}$ are non-trivial cut vertices, $x$ is a bad leaf, $y_{1}, y_{2}, z$ are
  leaves, $b_{1}$ is a bad bridge, $b_{2}, b_{3}, b_{4}$ are trivial bridges, $b_{5}$ is a non-trivial bridge, and $B_{1}, B_{2}, B_{3}$ are cyclic blocks.}
 \end{center}
\end{figure}

\begin{figure}[ht]
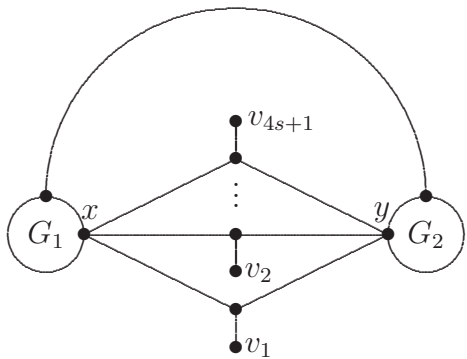

  \begin{center}
  $$\beginpicture
    \setcoordinatesystem units <1mm,1mm>
    \setplotarea x from 0 to 0, y from 0 to 30
    \put{$\bullet$} at -20  0
    \put{$\bullet$} at  20  0
    \circulararc 360 degrees from -20 0 center at -25 0
    \put{$G_{1}$} at -25 0
    \circulararc 360 degrees from 20 0 center at 25 0
    \put{$G_{2}$} at 25 0
    \put{$\bullet$} at 0 -15
    \put{$v_{1}$} at   3 -15
    \put{$\bullet$} at 0 -10
    \put{$\bullet$} at 0  -5
    \put{$v_{2}$} at   3  -5
    \put{$\bullet$} at 0   0
    \plot 0 -15 0 -10 -20 0 0 0 0 -5 0 0 20 0 0 -10 /
    \put{$\vdots$} at 0  6.25
    \put{$\bullet$} at 0  10 \put{$x$} at -19.2 3 \put{$y$} at 19.2 3
    \put{$\bullet$} at -25 5
    \put{$\bullet$} at 25 5
    \circulararc -180 degrees from -25  5 center at 0 5
    \put{$v_{4s+1}$} at 6 15
    \put{$\bullet$} at 0  15
    \plot -20 0 0 10 0 15 0 10 20 0 /
   \endpicture$$\label{Figure2}
   \caption{Essentially 2-edge connected graphs $G$ such that their square contains no $[2,2s]$-factor, where $G_{1}$ and $G_{2}$ are
            any essentially 2-edge connected graphs.}
  \end{center}
 \end{figure}

 Firstly, we look at the graph in Fig.~2, from which one may see the following result.
\begin{theorem}
 \label{Main-}
 For any fixed positive integer $s$, there exists an infinite class of essentially 2-edge-connected graphs $G$ such that $G^{2}$ has no $[2, 2s]$-factor, even
 if the resulting graph obtained from $G$ by deleting its all leaves is 2-connected.
\end{theorem}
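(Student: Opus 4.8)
The plan is to build an explicit infinite family rather than to argue abstractly. Fix a positive integer $s$ and, for every integer $m\ge 4s+1$, let $G=G_m$ be the graph obtained from the complete bipartite graph $K_{2,m}$ with parts $\{w,w'\}$ and $\{x_1,\dots,x_m\}$ by attaching one pendant edge $x_i\ell_i$ to each $x_i$. Deleting all leaves $\ell_1,\dots,\ell_m$ returns $K_{2,m}$, which is $2$-connected, so the ``additional property'' in the statement holds; and $G$ itself is essentially $2$-edge connected, since removing a single edge of $K_{2,m}$ leaves it connected (here $m\ge 2$), so all leaves remain attached and no second nontrivial component is created, while removing a pendant edge $x_i\ell_i$ splits off only the trivial component $\{\ell_i\}$. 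Letting $m$ vary supplies the infinite class, so it remains to show that $G^2$ has no $[2,2s]$-factor.

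The key local observation is that in $G^2$ each leaf has very few neighbours: since the only neighbour of $\ell_i$ in $G$ is $x_i$ and $N_G(x_i)=\{w,w',\ell_i\}$, we get $N_{G^2}(\ell_i)=\{x_i,w,w'\}$; in particular the leaves are pairwise non-adjacent in $G^2$ and $\ell_i$ is non-adjacent to $x_j$ for $j\ne i$, these pairs being at distance $\ge 3$ in $G$. Thus each $\ell_i$ has exactly three candidate incident edges in $G^2$, namely $\ell_ix_i$, $\ell_iw$ and $\ell_iw'$.

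Now I would argue by contradiction: suppose $F$ is a $[2,2s]$-factor of $G^2$. Every $\ell_i$ has positive even degree in $F$ but only three available edges, so $d_F(\ell_i)=2$; hence $\ell_i$ uses exactly two of $\ell_ix_i,\ell_iw,\ell_iw'$, and in particular at least one of $\ell_iw$, $\ell_iw'$ lies in $F$ (otherwise $\ell_i$ would be left with degree at most $1$). Summing the number of leaf-edges incident with $w$ and with $w'$ then gives $d_F(w)+d_F(w')\ge m\ge 4s+1$, so $\max\{d_F(w),d_F(w')\}\ge 2s+1$, contradicting $\Delta(F)\le 2s$. Therefore no $[2,2s]$-factor exists.

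There is no hard calculation here; the real difficulty, and where I expect the design work to lie, is the choice of gadget. The hypothesis that $G$ with its leaves deleted be $2$-connected rules out the naive lower-bound constructions (pendant trees or subdivided stars hung at a single vertex), because deleting the leaves from such a gadget leaves a cut vertex; one is therefore forced to place the obstruction inside a genuinely $2$-connected core, and $K_{2,m}$ with all the leaves on the large side is the natural candidate, since it funnels the ``two edges each'' demand of the $m$ leaves through the two vertices $w,w'$ alone. It is also worth recording in a remark that $G^2$ does contain a connected even factor (for $m$ odd: take a spanning path of the clique $\{x_1,\dots,x_m\}$ in $G^2$, join its ends to $w$ and to $w'$ respectively, and add all edges $w\ell_i$ and $w'\ell_i$, so that every vertex has positive even degree), so that the obstruction is genuinely the bound on the maximum degree, in accordance with the abstract.
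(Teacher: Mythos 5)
Your proposal is correct and follows essentially the same approach as the paper: the construction in the paper's Fig.~2 likewise attaches $4s+1$ pendant leaves to vertices whose only other neighbours are two fixed hub vertices, so that in the square each leaf can only reach its support and the two hubs, and the degree-sum at the hubs rules out any $[2,2s]$-factor. Your $K_{2,m}$ family is a concrete instance of the same gadget (the infinite class coming from varying $m$ rather than the two attached essentially 2-edge connected graphs $G_1,G_2$), and your explicit counting supplies the ``easy to see'' verification the paper leaves implicit.
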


\begin{proof}
 Note that the graph $G$ in Fig.~2 is an essentially 2-edge-connected graph. Since every leaf $v_i$ of $G$ has degree exactly 3 in $G^{2}$, at least one
 edge of $v_ix, v_iy$ have to be used in any possible $[2,4]$-factor of $G^2$. Therefore, $G^2$ has no $[2, 2s]$-factor since $G$ has  $4s+1$ such leaves.
\end{proof}

On the other hand, we may show the following result, which is the main result of this paper.

\begin{theorem}
 \label{Main++}
  Let $G$ be a connected graph without non-trivial bridges and without any two bad leaves at distance exactly 4. Then $G^{2}$ has a $[2,4]$-factor.
\end{theorem}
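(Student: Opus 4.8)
\noindent The plan is to construct the $[2,4]$-factor by working through the block--cut tree of $G$, using Fleischner's Theorem~\ref{Fleischner} inside each $2$-connected block and stitching the pieces together at cut vertices, where the distance-$2$ edges of $G^{2}$ provide the flexibility.

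\smallskip\noindent\emph{Block structure and leaves.} Since $G$ has no non-trivial bridge, every block of $G$ is either a pendant edge (a trivial bridge together with its leaf) or a $2$-connected ``cyclic'' block, and these are joined at cut vertices. After disposing of degenerate cases (for instance $G$ a star, or $|V(G)|$ very small, where $G^{2}$ is complete or the factor is immediate) I may assume $G$ has at least one cyclic block. Leaves are absorbed by three operations that do not interfere with the rest. If a vertex $y$ carries leaves $x_{1},\dots,x_{k}$ with $k\ge 2$, attach them all at once as the cycle $y\,x_{1}\,x_{2}\cdots x_{k}\,y$ of $G^{2}$ (consecutive $x_{i}$ share the neighbour $y$); this raises $d(y)$ by exactly $2$ and touches nothing else. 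If $y$ carries exactly one, non-bad, leaf $x$, then $y$ is a non-trivial cut vertex and $x$ will be inserted when the blocks at $y$ are stitched, by subdividing an edge $ya$ of $G$ used there into $y\!-\!x\!-\!a$. The only genuinely constrained case is a bad leaf $x$ at a trivial cut vertex $y$: then $y$ lies in a single cyclic block $B$ and every $G^{2}$-neighbour of $x$ other than $y$ lies in $B$, so $x$ must be absorbed by an operation local to $B$ --- either by subdividing an edge of the chosen Hamilton cycle of $B^{2}$ that is incident with $y$ and lies in $G$, or, at the cost of raising $d(y)$ to $4$, by a short local rerouting of that cycle through a neighbour of $y$ in $B$.

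\smallskip\noindent\emph{Cycles in the blocks and stitching (the crux).} In each cyclic block $B$ I would invoke Theorem~\ref{Fleischner} to get a Hamilton cycle $C_{B}$ of $B^{2}$ whose behaviour is controlled at two prescribed vertices of $B$ (both incident cycle edges in $G$ at one, one incident cycle edge in $G$ at the other). I then root the block--cut tree and stitch bottom-up: at a cut vertex $v$ shared by a ``parent'' block and some child blocks, open the child cycles at $v$ and splice them into a single cycle through $v$ --- the junction edges run from a neighbour of $v$ to a neighbour of $v$, hence lie in $G^{2}$, provided the child cycles use edges of $G$ at $v$ --- and then add this cycle to the untouched parent cycle, so that $v$ acquires total degree $4$. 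The whole difficulty now sits in one bookkeeping claim: the hypotheses force that the set of ``demands'' on a block $B$ --- the vertices of $B$ at which $C_{B}$ is required to use an edge of $G$, namely the cut vertices of $G$ in $B$ through which stitching passes and the attachment vertices of bad leaves in $B$ --- never exceeds the two vertices that Theorem~\ref{Fleischner} can serve. Here the no-non-trivial-bridge hypothesis keeps cut vertices close to the cycles of their blocks, and forbidding two bad leaves at distance \emph{exactly} $4$ is precisely the statement that no two bad-leaf attachment vertices of a block lie at distance exactly $2$ --- equivalently, that two bad leaves never have to be hung off a common vertex to which they cannot be joined in $G^{2}$; the remaining cases (bad leaves at distance $3$, whose attachment vertices are adjacent, and bad leaves at distance $\ge 5$) are absorbed using the degree-$4$ slack at cut vertices and the local rerouting above. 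I expect this demand counting --- choosing the root and the sibling orderings well, and checking the short list of configurations that the distance-$4$ ban exactly excludes --- to be the main obstacle; everything else is local gadgetry.

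\smallskip\noindent\emph{Verification.} It then remains to check that the union of all block cycles, stitching cycles and leaf gadgets is connected (stitching preserves connectivity of the block--cut tree, and the leaf gadgets hang off it), spanning (each $C_{B}$ is Hamilton in $B$, and every leaf is attached), even (each operation changes degrees by $0$ or $\pm 2$, or introduces fresh degree-$2$ vertices), and of maximum degree at most $4$ (non-cut vertices have degree $2$; a cut vertex has degree $2$ from stitching plus at most $2$ from a leaf gadget; a bad-leaf attachment vertex has degree $2$ or $4$). This produces the desired $[2,4]$-factor of $G^{2}$.
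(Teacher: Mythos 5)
There is a genuine gap: the step you yourself flag as ``the crux'' --- the bookkeeping claim that the demands on a block never exceed the two vertices that Theorem~\ref{Fleischner} can serve --- is not proved, and as stated it is not even plausible. A single cyclic block $B$ can contain arbitrarily many cut vertices of $G$, each carrying children blocks and/or bad leaves, so the cycle $C_B$ may be asked to use edges of $G$ at many vertices of $B$, far beyond the two prescribed vertices Fleischner's theorem controls. The paper does not resolve this by counting demands per block; it sidesteps it with a different division of labour. In its Useful Lemma (Lemma~\ref{Main}) one orders the blocks so that each block is attached as an end block meeting only one cut vertex $v_0$, and Fleischner is invoked in that block prescribing only $v_0$ (plus possibly the global vertex $u$). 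The $G$-edge requirement at a cut vertex $y$ is thus met by edges coming from the \emph{children} blocks of $y$, never by several simultaneous prescriptions inside one parent block, and the lemma records enough extra structure (degree $4$ at cut vertices, two incident $F$-edges in $G$ from children blocks, distinctness of these pairs across cut vertices) to make the induction close. Without such a strengthened inductive statement, your bottom-up stitching has nothing to recurse on, and the ``short list of configurations'' you hope to check is not identified.

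The second, related gap is the treatment of bad leaves. You propose to absorb each bad leaf by a local modification of the Hamilton cycle of its block, but you never argue that the modifications for different bad leaves do not collide, which is exactly where the hypothesis on distance $4$ must be spent. The paper makes this precise by removing the set $X$ of bad leaves first, applying the Useful Lemma to $G-X$ (where each attachment vertex $y_i$ is no longer a cut vertex and hence has degree $2$ in the factor, giving slack), and then reinserting the bad leaves via explicit edge swaps: attachment vertices of bad leaves at mutual distance $3$ induce complete graphs (this is where ``no two bad leaves at distance exactly $4$'' is used), and for the remaining bad leaves, at pairwise distance at least $5$, one checks that the auxiliary vertices $z_i$ are pairwise distinct and not adjacent to other bad leaves, again by the distance hypothesis, so the swaps are independent. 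Your sketch gestures at ``degree-$4$ slack'' and ``local rerouting'' for these cases but gives no argument that the reroutings at nearby bad leaves, or at a bad leaf whose only non-leaf neighbour's neighbour is a cut vertex already saturated to degree $4$, can be carried out simultaneously. So the proposal is a reasonable plan in the same general spirit (blocks, Fleischner, leaf gadgets), but both of its load-bearing claims are left open, and one of them is formulated in a way that would not survive as stated.
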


The following corollaries are immediate consequences of Theorem~\ref{Main++}.

\begin{corollary}
 \label{Cor1}
  If $G$ is a 2-edge connected graph, then $G^2$ contains a $[2,4]$-factor.
\end{corollary}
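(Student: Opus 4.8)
The statement to prove is Corollary~\ref{Cor1}: if $G$ is 2-edge connected, then $G^2$ has a $[2,4]$-factor. The plan is to derive this directly from Theorem~\ref{Main++} by checking that the hypotheses of that theorem are automatically satisfied for a 2-edge connected graph. Theorem~\ref{Main++} requires two things of a connected graph: (i) it has no non-trivial bridges, and (ii) it has no two bad leaves at distance exactly $4$. So the whole argument reduces to verifying (i) and (ii) for $G$ 2-edge connected.

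\textbf{Key steps.} First I would observe that a 2-edge connected graph has \emph{no bridges at all}: by definition, deleting a single edge cannot disconnect $G$, so in particular $G$ contains no cut-edge, hence no trivial bridge and no non-trivial bridge. This immediately gives (i). Second, since $G$ has no bridges it has no leaves either (a vertex of degree $1$ would be incident with a cut-edge); a fortiori $G$ has no bad leaves, so the set of bad leaves is empty and condition (ii) — no two bad leaves at distance $4$ — holds vacuously. Having checked both hypotheses, I would then simply invoke Theorem~\ref{Main++} to conclude that $G^2$ has a $[2,4]$-factor. One small edge case worth a remark: a 2-edge connected graph on a single vertex or a single edge is usually excluded or handled trivially (and $K_{1,2}$, $K_{1,3}$ are not 2-edge connected anyway), so the degenerate small graphs that Theorem~\ref{Main++} treats separately do not even arise here.

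\textbf{Main obstacle.} Honestly, there is no real obstacle: the corollary is a one-line consequence once one spells out that ``2-edge connected'' forbids bridges and therefore leaves. The only thing to be careful about is the terminology — confirming that the paper's notion of ``non-trivial bridge'' (a cut-edge not containing a leaf) is indeed a special case of ``cut-edge'', so that the bridge-free conclusion of 2-edge connectivity covers it — and that ``essentially 2-edge connected'' (the weaker notion used elsewhere in the paper) is \emph{not} what is being assumed here, since that weaker notion does allow pendant leaves and is exactly the setting of the negative Theorem~\ref{Main-}. So the write-up is just: 2-edge connected $\Rightarrow$ no cut-edges $\Rightarrow$ no bridges and no leaves $\Rightarrow$ hypotheses of Theorem~\ref{Main++} hold $\Rightarrow$ done.
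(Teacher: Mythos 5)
Your argument is correct and is exactly the paper's route: the paper states Corollary~\ref{Cor1} as an immediate consequence of Theorem~\ref{Main++}, and your verification that 2-edge connectivity rules out all bridges (hence all leaves and bad leaves), making both hypotheses hold vacuously, is the intended one-line justification.
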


\begin{corollary}
 \label{Cor2}
  If $G$ is an essentially 2-edge connected graph without bad leaves, then $G^2$ contains a $[2,4]$-factor.
\end{corollary}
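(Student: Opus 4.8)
The plan is to derive the statement as an immediate corollary of Theorem~\ref{Main++} by checking that an essentially 2-edge connected graph without bad leaves satisfies both hypotheses of that theorem. No new combinatorial construction is needed; the entire content lies in the earlier theorem, so the task reduces to a short translation between the connectivity hypothesis here and the two structural hypotheses of Theorem~\ref{Main++}.

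First I would reconcile the phrase \emph{essentially 2-edge connected} with the phrase \emph{without non-trivial bridges}. Since $G$ is connected, deleting zero edges leaves one component, so the only way to obtain two nontrivial components by deleting fewer than two edges is to delete a single cut-edge $e$. Deleting a cut-edge $e=uv$ splits $G$ into the component containing $u$ and the component containing $v$; the side of an endpoint is a single vertex exactly when that endpoint is a leaf. By the definitions in the paper, a trivial bridge is a cut-edge containing a leaf (so one resulting side is trivial), whereas a non-trivial bridge contains no leaf (so both sides are nontrivial). Hence deleting a single edge yields two nontrivial components if and only if that edge is a non-trivial bridge, and therefore $G$ is essentially 2-edge connected if and only if $G$ has no non-trivial bridge. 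This establishes the first hypothesis of Theorem~\ref{Main++}.

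Next I would dispatch the second hypothesis. Theorem~\ref{Main++} requires that $G$ have no two bad leaves at distance exactly $4$; but here $G$ has no bad leaves whatsoever, so this condition holds vacuously. With both hypotheses verified, Theorem~\ref{Main++} applies directly and produces a $[2,4]$-factor of $G^2$, which is precisely the assertion to be proved. I do not anticipate a genuine obstacle, since the work is already carried by Theorem~\ref{Main++}; the one point requiring attention is the equivalence in the preceding paragraph, where I must ensure that trivial bridges—permitted in an essentially 2-edge connected graph because one of their sides is a single leaf—are not mistakenly counted as obstructions, so that the class of essentially 2-edge connected graphs coincides exactly with the class of graphs without non-trivial bridges treated in Theorem~\ref{Main++}.
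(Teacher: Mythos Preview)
Your proposal is correct and follows exactly the paper's approach: the paper simply states that Corollary~\ref{Cor2} is an immediate consequence of Theorem~\ref{Main++}, and your argument spells out the two verifications (essential 2-edge connectivity is equivalent to having no non-trivial bridge, and the absence of bad leaves makes the distance-4 condition vacuous) that justify this.
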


\begin{corollary}
 \label{Main+}
 Let $G$ be a connected graph without non-trivial bridges. If any two bad leaves have distance at least 5 in $G$, then $G^{2}$ has a $[2,4]$-factor.
\end{corollary}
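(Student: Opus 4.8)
The plan is to read off Corollary~\ref{Main+} directly from Theorem~\ref{Main++}. Theorem~\ref{Main++} asks two things of $G$: that $G$ be connected with no non-trivial bridge, and that $G$ have no two bad leaves at distance exactly $4$. The first of these is literally the standing hypothesis of the corollary, so there is nothing to do there. For the second, I would only observe that if any two bad leaves of $G$ are at distance at least $5$, then in particular no two of them are at distance exactly $4$, since $4<5$; hence the hypothesis of Theorem~\ref{Main++} on bad leaves is satisfied as well. Applying Theorem~\ref{Main++} then produces a $[2,4]$-factor of $G^{2}$, which is exactly what the corollary claims.

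Thus the corollary is simply Theorem~\ref{Main++} with its (already weak) requirement ``no two bad leaves at distance exactly $4$'' strengthened to ``all bad leaves pairwise at distance at least $5$'', and the only step of the proof is the trivial implication ``distance $\ge 5$'' $\Rightarrow$ ``distance $\ne 4$''. Consequently I do not expect any genuine obstacle; the main ``content'' is just recognizing that the distance-$5$ assumption rules out the exceptional distance-$4$ configuration that Theorem~\ref{Main++} is designed to exclude.

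If a self-contained argument were preferred, one could instead imitate the proof of Theorem~\ref{Main++} in the special case where no two bad leaves are at distance $3$ either (which is automatic here). Namely, let $X$ be the set of bad leaves of $G$, set $G'=G-X$, and apply the Useful Lemma (Lemma~\ref{Main}) to obtain a $[2,4]$-factor $F'$ of $(G')^{2}$ satisfying a)--e); then reinsert each bad leaf $x_i$ with neighbour $y_i$ by replacing a chosen edge $y_i z_i \in E(F')$ by the length-two path $y_i x_i z_i$ of $G^{2}$. Since the bad leaves are pairwise at distance at least $5$, the vertices $y_i$ are distinct and mutually non-adjacent, the vertices $z_i$ are distinct from one another and from every $y_j$, and the local substitutions do not interfere; because $d_{F'}(y_i)=2$ by (a), all degrees remain in $\{2,4\}$ and the factor stays connected. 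This route reproduces the sub-case $X_{0}=\emptyset$ of the proof of Theorem~\ref{Main++}, so either way the verification is routine.
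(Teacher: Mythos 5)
Your proposal is correct and matches the paper, which presents Corollary~\ref{Main+} as an immediate consequence of Theorem~\ref{Main++}: the distance-at-least-$5$ hypothesis trivially excludes two bad leaves at distance exactly $4$, so Theorem~\ref{Main++} applies directly.
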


 Note that the graph in Fig.~2 also shows that the distance 5 in Corollary~\ref{Main+} can not be replaced by distance 4.

\section{A \emph{Useful lemma}}

Before presenting this lemma, we need some additional notation. {\sl Block graph} of a graph $G$, denoted by $BC(G)$, is the graph whose vertex set
consists of all blocks and cut vertices of $G$, and two vertices are adjacent in $BC(G)$ if one of them is a block of $G$ and the second one is its vertex.
It is easy to see that $BC(G)$ is a tree for a connected graph $G$. Note that for any tree, we may choose any vertex as its root. Hence without loss of
generality, we may assume that $B_1, \ldots, B_t$ be all blocks of $G$ such that $B_1$ corresponds to the root of $BC(G)$. For a cut-vertex $v$ of $G$,
{\sl the parent block} of $v$ is the block containing $v$ and its corresponding vertex  in $BC(G)$ has the smallest distance to the root of $BC(G)$.
The remaining blocks containing $v$ are called  {\sl children blocks} of $v$ with respect to the root of $BC(G)$.

The following lemma, we call it a \emph{Useful lemma}, is a key for the proof of  our main result (Theorem ~\ref{Main++}).

\begin{lemma}(Useful lemma)
 \label{Main}
  Let $G$ be a connected graph without non-trivial bridges and without bad leaves (except $K_{1,2}, K_{1,3}$) and $u$ be a vertex of $G$ that is neither
  a cut vertex nor a leaf (if any).

  \noindent
  Then $G^{2}$ has a $[2,4]$-factor $F$ such that
  \begin{itemize}
   \item [a)] $d_F(x)=2$ for any vertex $x$ that is not a cut vertex of $G$;
   \item [b)] both edges of $F$ incident with $u$ belong to $G$;
   \item [c)] for each cut vertex $y$ of $G$ it holds that $d_F(y)=4$ and at least two edges of $F$ incident with $y$ belong to $G$,
              moreover if $y$ is a trivial cut vertex, then these two edges are trivial bridges;
   \item [d)] for any cut vertex $y$ of $G$, the two edges incident with $u$ in $F$ are distinct from the two edges incident with $y$ in $F$
              as specified in (c);
   \item [e)] for any two cut vertices $y_1$ and $y_2$ of $G$, the two edges of $F$ incident with $y_1$ as specified in $(c$) are distinct from those
              with $y_2$ as specified in~(c).
  \end{itemize}
\end{lemma}

\begin{proof}
 If $G$ is $K_{1,s}$, for $s\geq4$, then $G^2$ is a complete graph and the result is obvious. Now we assume that $G$ contains at least one cyclic block and 
 $G'=G-M$, where $M$ is a set of all leaves adjacent with all trivial cut vertices of $G$.

 Let $\mathbb{O}=B_{1}, B_{2}, ..., B_{k}$ be an ordering of all blocks of $G'$ such that either $u\in V(B_{1})$, if any, or we choose arbitrary cyclic block
 as $B_{1}$, satisfying the following properties:

 - for any cut vertex $v$ of $G'$, all children blocks of $v$ with respect to the root $r$ of $BC(G')$ corresponding to $B_{1}$ appear consecutively
   in $\mathbb{O}$ such that bridges containing $v$ are in $\mathbb{O}$ before cyclic blocks containing $v$;

 - $\mbox{dist}_{BC(G')}(r, v_{i})<\mbox{dist}_{BC(G')}(r, v_{j})$ implies $i<j$, where $v_{i}, v_{j}$ are vertices of $BC(G')$ corresponding
   to $B_{i}, B_{j}$, respectively.

 \medskip

 Then $G'$ is a connected graph without non-trivial bridges and without bad leaves and we prove by induction on $k$ that $(G')^{2}$ contains
 a $[2,4]$-factor $F'$ such that

 \begin{itemize}
  \item[1)] $d_{F'}(x)=2$ for any vertex $x$ that is not a cut vertex of $G$;

  \item[2)] both edges of $F'$ incident with $u$, if any, belong to $B_{1}$;

  \item[3)] for each cut-vertex $y$ of $G'$, it holds that $d_{F'}(y)=4$ and at least two edges of $F'$ incident with $y$ belong to $G'$. Moreover,

   \begin{itemize}
    \item  if $y$ belongs to exactly two blocks of $G'$, then at least two edges of $F'$ incident with $y$ are edges from  the children block of $y$
             with respect to $r$ (the root of $BC(G')$ corresponding to $B_{1}$);

    \item  if $y$ belongs to more than two blocks of $G'$, then at least two edges of $F'$ incident with $y$ are edges from two different children blocks
             of $y$ with respect to $r$.
   \end{itemize}
 \end{itemize}

 For $k=1$, $G'=B_{1}$ and $(G')^{2}$ even has a hamiltonian cycle $C$ such that both edges of $F'$ incident with $u$, if any, belong to $B_{1}$
 by Theorem~\ref{Fleischner}.

 Let $k>1$ and assume that Lemma~\ref{Main} is true for all integers less than $k$. By the definition of $G'$ and $\mathbb{O}$, $B_{k}$ is an end cyclic block of $G'$
 and let $v_{0}$ be the cut vertex of $G'$ with $v_{0}\in V(B_{k})$.

 If $B_{k-1}=v_{0}l$ (i.e. $B_{k-1}$ is a bridge) and $B_{k-1}, B_{k}$ are only children blocks of $v_{0}$ with respect to $r$, then
 we set $G_{1}=G' - \{V(B_{k})\cup\{l\}\setminus\{v_{0}\}\}$, otherwise we set $G_{2}=G' - \{V(B_{k})\setminus\{v_{0}\}\}$. Hence $G_{1}, G_{2}$ are
 connected graphs without non-trivial bridges and without bad leaves and have $k-2, k-1$ blocks, respectively. Hence by the induction hypothesis,
 $(G_{1})^{2}, (G_{2})^{2}$ have a $[2,4]$-factor $F_{1}, F_{2}$ with properties 1), 2), and~3), respectively.

 By Theorem~\ref{Fleischner}, there is a Hamiltonian cycle $C$ in $(B_{k})^{2}$ such that two edges $f_{1}, f_{2}$ of $C$ incident with $v_{0}$ belong
 to $B_{k}$ and thus belong to $G'$.

 \medskip

 \noindent
 \emph{Case 1:} $G_{1}$ exists.

  Let $f_{1}=v_{0}v_{k}$. Then $F'=(F_{1}\cup C)\cup\{v_{0}l,v_{k}l\}-\{f_{1}\}$ is the $[2,4]$-factor of $(G')^{2}$ with properties 1), 2), and 3).

 \medskip

 \noindent
 \emph{Case 2:} $G_{1}$ does not exist and $v_{0}$ is not a cut vertex in $G_{2}$.

  Hence $v_{0}$ belongs to exactly two blocks of $G'$ and $F'=F_{2}\cup C$ is the $[2,4]$-factor of $(G')^{2}$ with properties 1), 2), and 3).

 \medskip

 \noindent
 \emph{Case 3:} $G_{1}$ does not exist and $v_{0}$ is a cut vertex in $G_{2}$.

 Let $f_{1}=v_{0}v_{k}$. We consider two possibilities depending on the property~3).

 If exactly two blocks of $G_{2}$ contain $v_{0}$, then by the induction hypothesis $d_{G_{2}}(v_{0})=4$ and there are two edges of $F_{2}$ incident
 with $v_{0}$ from a children block $B_{k-1}$ of $v_{0}$. (Note that $B_{k-1}$ is a cyclic block, since $G_{1}$ does not exist.) Let $e_{k-1}=v_{0}v_{k-1}$
 be  such an edge of $F_{2}$. Since $\mbox{dist}_{G'}(v_{k-1},v_{k})=2$, the edge $v_{k-1}v_{k}$ is an edge of $(G_{2})^{2}$.
 Thus $F'=(F_{2} \cup C)\cup\{v_{k-1}v_{k}\}-\{e_{k-1},f_{1}\}$ is the $[2,4]$-factor of $(G')^{2}$ with properties 1), 2), and 3).

 If there are more than two blocks of $G_{2}$ containing $v_{0}$, then by the~induction hypothesis $d_{G_{2}}(v_{0})=4$ and there are two edges
 $e_{k-2},e_{k-1}$ of $F_{2}$ incident with $v_{0}$ in $B_{k-2},B_{k-1}$, respectively. Note that it could be $B_{k-2}=e_{k-2}$ or $B_{k-1}=e_{k-1}$. 
 Let $e_{k-2}=v_{0}v_{k-2}$. Since $\mbox{dist}_{G'}(v_{k-2},v_{k})=2$, the edge $v_{k-2}v_{k}$ is an edge of $(G')^{2}$. Thus 
 $F'=(F_{2}\cup C)\cup\{v_{k-2}v_{k}\}-\{e_{k-2},f_{1}\}$ is the $[2,4]$-factor of $(G')^{2}$ with properties 1), 2), and 3).

 \medskip

 Now we extend $F'$ to a $[2,4]$-factor $F$ in $G^{2}$ with required properties. Note that the properties 1), 2), and 3) imply the
 properties a)-e) in Lemma~\ref{Main}.

 Let $u_{1},u_{2},...,u_{t}$ be all trivial cut vertices of $G$ and $l_{i}^{1},l_{i}^{2},...,l_{i}^{s_{i}}$ be all leaves incident with $u_{i}$,
 for $i=1,2,...,t$. Note that $s_{i}\geq2$, otherwise we have a bad bridge in $G$, a contradiction. For $i=1,2,...,t$, let
 $C_{i}=u_{i}l_{i}^{1}l_{i}^{2}...l_{i}^{s_{i}}u_{i}$ be cycles in $G^{2}$ and $C'=\cup_{j=1}^{t}C_{j}$. Since $d_{F'}(u_{i})=2$ and
 $u_{i}l_{i}^{1}, l_{i}^{s_{i}}u_{i}$ are edges from $G$, $F=F'\cup C'$ is the $[2,4]$-factor of $G^{2}$ with properties a)-e).
\end{proof}

Note that clearly the square of $K_{1,2}$, $K_{1,3}$ is hamiltonian but there is no $[2,4]$-factor with a vertex of degree 4 in the square of $K_{1,2}$,
$K_{1,3}$, respectively.

\section{Proof of Theorem~\ref{Main++}}
In this section we prove   Theorem~\ref{Main++}.


\begin{proof}
 Firstly if $G$ is $K_{1,2}$ or $K_{1,3}$, then clearly $G^{2}$ is even hamiltonian.

 Now let $X$ be a set of all bad leaves of $G$ and $G'=G-X$. For $x_{i}\in X$, we denote $y_{x_{i}}$ or only $y_{i}$ its unique neighbor in $G$.
 By Lemma~\ref{Main}, there is a [2,4]-factor $F'$ of $(G')^2$ with properties a)-e). Note that $d_{F'}(y_{i})=2$ for each $y_{i}$.

 By the definition, any two bad leaves have a distance at least 3. Let $X_{0}\subseteq X$ be the set of all bad leaves that has a bad leaf
 at the distance exactly~3 in $G$. Then, for all $x_{i}\in X_{0}$, corresponding $y_{i}$'s induce a subgraph of $G'$ in which all components (denoted by
 $H_{1},H_{2},...,H_{s}$) are complete graphs, otherwise we have in $G$ two bad leaves at distance 4, a contradiction.

 Let $V(H_{i})=\{y_{i,1},y_{i,2},...,y_{i,t_{i}}\}$, $t_{i}\geq 2$ for $i=1,2,...,s$. Then we set

 $$M_{i}=\bigcup_{j=1}^{t_i-1}\{x_{i,j}y_{i,j+1}, x_{i,j+1}y_{i,j}\}~\bigcup~\{x_{i,1}y_{i,1}, x_{i,t_i}y_{i,t_i}\}.$$

 All bad leaves of $X\setminus X_{0}$ are pairwise at distance at least 5 and we divide them into the following three disjoint classes by the following way
 (see Fig.~\ref{Figure3} for illustration):

  \begin{figure}[ht]
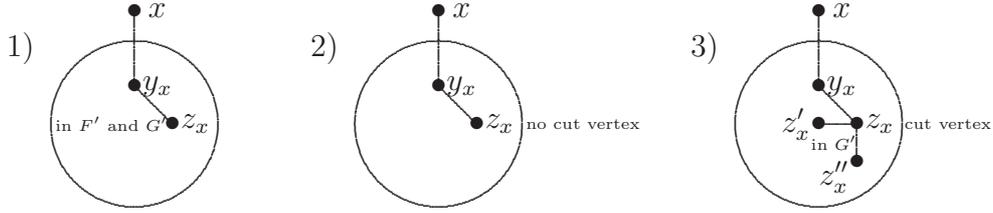

  \begin{center}
   \label{Figure3}
   $$\beginpicture
    \setcoordinatesystem units <1mm,1mm>
    \setplotarea x from -30 to 0, y from -10 to 10
    \put{$1)$} at -65 10
    \put{$2)$} at -25 10
    \put{$3)$} at 25 10
    \circulararc 360 degrees from -39 0 center at -50 0
    \circulararc 360 degrees from -21 0 center at -10 0
    \circulararc 360 degrees from  29 0 center at 40 0
    \put{$y_{x}$} at -47 5
    \put{$\bullet$} at -50 5
    \put{$x$} at -47 15
    \put{$\bullet$} at -50 15
    \put{$z_{x}$} at -42 0
    \put{$\bullet$} at -45 0
    \put{\tiny {in $F'$ and $G'$}} at -53 0
    \put{$y_{x}$} at -7 5
    \put{$\bullet$} at -10 5
    \put{$x$} at -7 15
    \put{$\bullet$} at -10 15
    \put{$z_{x}$} at -2 0
    \put{$\bullet$} at -5 0
    \put{\tiny{no cut vertex}} at 9 0
    \put{$y_{x}$} at 43 5
    \put{$\bullet$} at 40 5
    \put{$x$} at 43 15
    \put{$\bullet$} at 40 15
    \put{$z_{x}$} at 48 0
    \put{$\bullet$} at 45 0
    \put{\tiny{cut vertex}} at 57 0
    \put{$z'_{x}$} at 37 0
    \put{$\bullet$} at 40 0
    \put{$z''_{x}$} at 42 -7
    \put{$\bullet$} at 45 -5
    \put{\tiny{in $G'$}} at 42 -2.5
    \plot -45 0 -50 5 -50 15 /
    \plot -5 0 -10 5 -10 15 /
    \plot 45 0 40 5 40 15 /
    \plot 40 0 45 0 45 -5 /
   \endpicture$$
   \caption{Three cases in an ordering of all bad leaves of $X\setminus X_0$ in $G$.}
  \end{center}
 \end{figure}

 \begin{itemize}
  \item [1)] Let $X_1$ be the set of all vertices $x\in X\setminus X_0$ such that there exists a vertex $z_x$ with $y_xz_x\in E(F')\cap E(G')$;
  \item [2)] Let $X_2$ be the set of all vertices $x\in X\setminus (X_0\cup X_1)$ such that there exists $z_x$, which is not a cut vertex of $G'$, 
             with $y_xz_x\in E(G')$ (and $y_xz_x\in E(F')$);
  \item [3)] Let $X_3$ be the set of all vertices $x\in X\setminus (X_0\cup X_1\cup X_2)$ (it means that there exists only a cut vertex $z_x$ of $G'$
             with $y_xz_x\in E(G')$ (and $y_xz_x\in E(F')$).
\end{itemize}

 Note that by Lemma~\ref{Main} we have
  \begin{itemize}
   \item  $d_{F'}(z_x)=2$ \ for $x\in X_2$;
   \item  $d_{F'}(z_x)=4$  and at least two edges incident with $z_x$ (namely $z_xz'_x, z_xz''_x)$ are in $E(F')\cap E(G')$ for $x\in X_3.$
  \end{itemize}
 Now set
 $$E_0=\bigcup _{i=1}^{s}M_{i},~~~ E_{1}=\bigcup_{x\in X_1}\{xy_x, xz_x\},~~~ E'_{1}=\bigcup_{x\in X_1}\{y_xz_x\},$$
 $$E_{2}=\bigcup_{x\in X_2}\{xy_x, xz_x, y_xz_x\},$$
 $$E_{3}=\bigcup_{x\in X_3}\{xy_x, xz_{x}, y_{x}z'_{x}\},~~~
   E'_{3}=\bigcup_{x\in X_3}\{z_{x}z'_{x}\}.$$

 For all $x$, $z_x$'s are different, otherwise if $z_{x}=z_{x'}$, for $x\neq x'$, then $xy_xz_x(=z_{x'})y_{x'}x'$ is
 a path of length 4 in $G$ joining two bad leaves, a contradiction. Similarly, none of $z_{x}$'s is a neighbor of a bad leaf in $G$.

 Possibly, $z_{x_{i_{1}}}z_{x_{i_{2}}}...z_{x_{i_{k}}}$ is a path in $F'$ for $\{x_{i_{1}},x_{i_{2}},...,x_{i_{k}}\}\subseteq X_3$.
 In order to have different edges in $E_{3}$ and $E'_{3}$ we set $z'_{x_j}=z_{x_{j+1}}$, for $j= i_{1},i_{2},...,i_{k-1}$, and
 $z'_{x_{i_{k}}}$ as arbitrary neighbor of $z_{x_{i_{k}}}$ in $F'$ and in $G$ different from $z_{x_{i_{k-1}}}$. Note that by 3) and
 Lemma~\ref{Main} such a vertex exists and could be some $z_{x_j}$, for $j\in \{i_{1},i_{2},...,i_{k-2}\}$.

 Hence we conclude that $F=F'\cup (E_{0}\cup E_{1}\cup E_{2}\cup E_{3})-(E'_{1}\cup E'_{3})$ is a~[2,4]-factor of $G^2$.
\end{proof}

\section{Conclusion}

Now we could answer the question from Introduction. By Theorem~\ref{Fleischner} we know that the square of 2-connected graph has a $[2,2s]$-factor
for $s=1$. In this paper we proved that the square of 2-edge-connected graph has a $[2,2s]$-factor for $s=2$ (Corollary~\ref{Cor1})
and that the square of essentially 2-edge-connected graph without bad leaves has a $[2,2s]$-factor also for $s=2$ (Corollary~\ref{Cor2}).
In general, there exist essentially 2-edge-connected graphs whose square have no $[2,2s]$-factor for every $s$.  This example of $G$ even exists under an
additional condition  that the graph obtained from $G$ by deleting all leaves is 2-connected (Theorem~\ref{Main-}).

\medskip

\noindent {\bf Acknowledgements}.

This work was supported by the European Regional Development Fund (ERDF), project NTIS - New Technologies for Information Society,
European Centre of Excellence, CZ.1.05/1.1.00/02.0090.

The first author was supported by project GA14-19503S of the Grant Agency of the Czech Republic.

The second author was supported by NSFC (No.11161046) and by Xinjiang Talent Youth Project (No.2013721012).

The third author was supported by NSFC (No.11471037 and No.11671037).

\end{document}